\newtheorem{theorem}{Theorem}[section]
\newtheorem{proposition}{Proposition}[section]
\newtheorem{definition}{Definition}[section]
\newtheorem{remark}{Remark}[section]
\newcommand{\R}{\mathbb{R}}
\newcommand{\map}[3]{#1:#2 \rightarrow #3}
\newcommand{\pder}[2]{\frac{\partial #1}{\partial #2}}
\numberwithin{equation}{section}
\title{
Stochastic Bridges over Ensemble of Linear Systems
}
\author{Daniel Owusu Adu and Yongxin Chen
}
\begin{document}

\maketitle
\thispagestyle{empty}
\pagestyle{empty}

\begin{abstract}
We consider particles that are conditioned to initial and final states. The trajectory of these particles is uniquely shaped by the intricate interplay of internal and external sources of randomness. The internal randomness is aptly modelled through a parameter varying over a deterministic set, thereby giving rise to an ensemble of systems. Concurrently, the external randomness is introduced through the inclusion of white noise. Within this context, our primary objective is to effectively generate the stochastic bridge through the optimization of a random differential equation.
As a deviation from the literature, we show that the optimal control mechanism, pivotal in the generation of the bridge, does not conform to the typical Markov strategy. Instead,  it adopts a non-Markovian strategy, which can be more precisely classified as a stochastic feedforward control input. This unexpected divergence from the established strategies underscores the complex interrelationships present in the dynamics of the system under consideration. 
\end{abstract}

\section{INTRODUCTION}\label{sec:introduction}
This paper concerns the problem of conditioning a Markov process at two endpoints. This problem was first studied by Schrodinger in~\cite{SE:31} which we  postulate as follows; assume some \emph{fully observed} particles, with density $\rho_0$ at time $t=0$, evolve according to a Markov process  in $\R^d$ with density 
\begin{equation}\label{eq:Brownian density}
q^{B}(s,x,t,y):=\frac{1}{(2\pi(t-s))^{\frac{d}{2}}}\mathrm{exp}\left(-\frac{\|x-y\|^2}{2(t-s)}\right),
\end{equation}
 where $0\leq s\leq t\leq t_f$ and $x,y\in\R^d$. Suppose at time $t=t_f$ the particles are observed to have a distribution $\rho_f$, where
\begin{equation*}
\rho_f(x_f)\neq\int_{\R^n}q^B(0,x_0,t_f,x_f)\rho_0(x_0)dx_0.
\end{equation*} 
Then, $\rho_f$ deviates from the law of large numbers. This means that our assumption of the Markov process is inaccurate. The following question arises:
\begin{enumerate}
    \item What density $q$ satisfies 
\[
\rho_f(x_f)=\int_{\R^n}q(0,x_0,t_f,x_f)\rho_0(x_0)dx_0.
\]
\item Among such densities $q$, which one is closest to $q^B$ in some suitable sense.
\end{enumerate}
Statement~1 and~2 constitute the Schrodinger bridge problem and the most likely stochastic process $\{X(t)\}_{0\leq t\leq t_f}$ such that the densities of the distributions of $X(0)$ and $X(t_f)$ coincides with $\rho_0$ and $\rho_f$, respectively, is called the Schrodinger bridge.

An important feature of the Markov process is that it is non-degenerate. That is the stochastic term affects all directions in the coordinate space. Related to our work and motivated by questions regarding the transport of particles having inertia, the case where the Markov process is degenerate has been studied in~\cite{CY-GT:15}. Irrespective of the type of Markov process, it is well-established that stochastic bridges are generated from stochastic optimal control problems (see~\cite{FWH:77,FWH:05,FWH-RRW:12,JB:75, PPD-PM:90, DPP:91,CY-GT:15} and reference therein).

Our problem is motivated by ensemble-based Reinforcement Learning~\cite{HC-DL-SR-YC-SC-SI-SY-HC-IH-JK:19}. In ensemble-based Reinforcement Learning, the Ornstein-Uhlenbeck (OU) process is a valuable tool for random exploration when an agent has no prior knowledge of how a system's states transition from one to another~\cite{JN-YK-PS:19}. One can envision a similar scenario where a robot learns to navigate in a new environment. Initially, the robot knows nothing about the environment's dynamics, such as how it moves from one state to another. To effectively learn and make informed decisions, the robot must explore its surroundings randomly. 


We consider an ensemble of stochastic processes~\cite{QJ-ZA-LJS:13,BR-KN:00}, much like a collection of robots~\cite{LJS-KN:07,LJS:10} attempting to explore a new environment for the first time (or a robot's various attempts to explore its new environment). Each process, indexed by a parameter, represents a potential trajectory or path the robot might take.  Our ultimate goal is to find the paths that are conditioned to meet certain statistical criteria, such as achieving bridging a given state end-points or behaviours. In the context of OU processes, our goal is geared toward understanding its typical behaviour, mean-reverting tendencies, and statistical characteristics which are consistent with the end-states. In this case, averaging the ensemble of OU processes is a practical and effective approach. That is by averaging the ensemble of OU, one can emphasize the mean-reverting behaviour and understand how the system tends to gravitate back to its central trajectory over time. We state here that studying an ensemble of OU processes is not new. In~\cite{DM-VS-SV-SO-PP-CG-PG:18}, they provided a mathematical framework to study the statistical properties of the centre-of-mass variable and its relation to the individual processes in the ensemble of OU. In particular, they determined a non-autonomous stochastic differential equation (SDE) satisfied by a process that is equivalent in distribution to the centre-of-mass variable of an ensemble of the OU processes. Furthermore, they established the equivalence in the distribution of the centre-of-mass variable with a randomly scaled Gaussian process (the product of a non-negative random variable and a Gaussian process). We state here that in as much as the centre-of-mass variable can be used to estimate the average concentration over the parameters, our result focuses on the average.

Following from~\cite{FWH:77,FWH:05,FWH-RRW:12,JB:75, PPD-PM:90, DPP:91,CY-GT:15}, in our case, the ensemble nature of the Markov process in our problem adds its own set of technical challenges in solving the corresponding stochastic optimal control problem. It turns out that averaging an ensemble of Markov processes fails to be a Markov process and seems to represent a more complex stochastic process than is usually encountered in the literature~\cite{NE:67,VHR:07,KFC:12,CY-GT:15}. Therefore, the standard tools in~\cite{JB:75, PPD-PM:90, DPP:91,CY-GT:15} used to generate a bridge will not be applicable in our case. To overcome this challenge, we rely on the equivalent discrete-time stochastic optimal control problem and characterize the optimal control through the approximation of the continuous-time stochastic process. We show that the parameter-independent optimal control that bridges the endpoint condition for an ensemble of Markov processes is a \emph{stochastic feedforward control input}. This deviates from the characterization of the optimal control that induces a stochastic bridge (see~\cite{JB:75,PPD-PM:90, DPP:91,NE:67,VHR:07,KFC:12,CY-GT:15,OB:03}). The distinction follows from the fact that, in a standard Markov process, it is possible to track that state and feed it back into the system to achieve the bridge. This leads to the optimal control strategy being a Markov Strategy. In our case, as you will see, it is not possible to track the average of an ensemble of a given Markov process. Thus leading to an \emph{stochastic feedforward control}.   In stochastic feedforward control, the control input is determined based on past and present values of the noise. Optimal feedforward controllers have been described in~\cite{NH:87,MPS:82}, where it is assumed that the control input is obtained from the output of a linear dynamic system driven by white noise. This characterization of control has applications in flight system control of robotics and crystal growth of power transmission networks (see~\cite{HN-DH-TDB:92,NH:87,MPS:82,HME:89} and reference therein). Secondly, unlike in~\cite{CY-GT:15} where controllability of the system is relevant to establish the Schrodinger bridge for the case of degeneracy, as we showed in~\cite{ADO:22}, our result relies on the so-called averaged observability inequality~\cite{LM-ZE:14,LJ-ZE:17,LQ-ZE:16,ZE:14} which is equivalent to the invertibility of a matrix (see~\cite{ADO:22}). This matrix is used to solve both the Schrodinger bridge problem and hence design the optimal control for our problem. We state here that our result is related to ensemble control theory~\cite{LJS-KN:07,LJS:10,GB-CX:21,CX:21,CX:19} which is motivated by quantum systems~\cite{LJS-KN:06} and also robust control theory~\cite{QJ-ZA-LJS:13,BR-KN:00} and has applications in a variety of fields including engineering~\cite{VA-WL:78,HCH-RSJ-GSM:18,WA-KMV:21} and economics~\cite{HLP-STJ:01,JA-NAS:11,OA-SJH:02,BWA-XA-YAN:14}.


The organization of the paper is as follows; We discuss the notion of stochastic averaged control problem in Section~\ref{sec: Averaged ensemble control problem}. We state conditions under which this is possible. After that, we state the problem statement and follow with the main result in Section~\ref{Sec: Problem Statement and Main Result}. We conclude with remarks on future work in Section~\ref{sec:Conclusion and future work}.

\section{Stochastic averaged ensemble control}\label{sec: Averaged ensemble control problem}
 Consider the ensemble of a controlled Markov process defined on a naturally filtered probability space $(\Omega,\mathcal{F},\mathbb{P})$ as follows
\begin{align}\label{eq:ensemble of stochastic system}
d X(t,\theta)=&A(\theta)X(t,\theta)d t+B(\theta)u(t)d t+\sqrt{\epsilon}B(\theta)d W(t),\cr
X(0,\theta)=&x_0,
\end{align}
where $X(t,\theta)\in\R^d$, is the random state of an individual system at time $t$ indexed by the sample point $\theta\in\Omega$,  $\map{A}{\Omega}{\mathbb{R}^{d\times d}}$ and $\map{B}{\Omega}{\mathbb{R}^{d\times m}}$ are measurable mappings such that $\sup_{\theta\in\Omega}\|A(\theta)\|<\infty$ and $\sup_{\theta\in\Omega}\|B(\theta)\|<\infty$, where the norm here is the Frobenius norm on the space of matrices, $u\in \mathrm{L}^1([0,t_f];\mathbb{R}^m)$ is a \emph{parameter-independent} control input, and $x_0$ is an initial  $d$-dimensional deterministic vector and $\{W(t)\}_{t\geq 0}\subset\R^m$ is the Wiener process such that $W(0)=0$.  Note that the Markov process indexed by $\theta$ at time $t$ is characterized by 
\begin{equation}\label{eq:ensemble of process}
 X(t,\theta)=e^{A(\theta)t} x_0+\int_{0}^{t}e^{A(\theta)(t-\tau)}B(\theta)u(\tau)d\tau+ \sqrt{\epsilon}\int_{0}^{t}e^{A(\theta)(t-\tau)}B(\theta)d W(\tau).
\end{equation}
For reasons that will be clear later, for now, we study the controllability of this Markov process in an appropriate sense. Since the system parameter is unknown but belongs to a deterministic set $\Omega$, it is natural to control the average over the parameter. For simplicity of presentation, we assume that the probability space 
 $(\Omega,\mathcal{F},\mathbb{P})$ is a uniform distributed probability space with $\Omega=[0,1]$. To this end, we proceed to the following definition.
\begin{definition}\label{def:averaged controllability}
The ensemble of linear stochastic system~\eqref{eq:ensemble of stochastic system} is said to be \emph{averaged controllable}  if, for any initial state $x_0\in\R^d$, final state $x_f\in\R^d$, and final time $t_f$,  there exists a parameter-independent control input $u\in L^1([0,t_f];\mathbb{R}^m)$ such that the ensemble of states in~\eqref{eq:ensemble of process} satisfies 
\[
\mathbb{E}\int_{0}^{1} X(t_f,\theta)d\theta=x_f.
\]
\end{definition} 

Note that by the linearity of the stochastic system~\eqref{eq:ensemble of stochastic system}, the expectation of the control will drive the deterministic part of the dynamics~\eqref{eq:ensemble of stochastic system} in the averaged sense. We proceed to the following useful result.
\begin{proposition}\label{prop:Gramian is invertible}
	If the matrix
	\begin{equation}\label{eq: Gramian}
G_{t_f,0}:=\int_{0}^{t_f}\left(\int_{0}^{1}e^{A(\theta)(t_f-\tau)}B(\theta)d\theta\right) \left(\int_{0}^{1}B^{\mathrm{T}}(\theta)e^{A^{\mathrm{T}}(\theta)(t_f-\tau)}d\theta\right) d\tau,	   
	\end{equation}
	is invertible then, the linear stochastic system~\eqref{eq:ensemble of stochastic system} is said to be averaged controllable.
\end{proposition}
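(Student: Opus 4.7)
The plan is to use the linearity of~\eqref{eq:ensemble of process} in the control to reduce the averaged controllability condition to a single finite-dimensional linear equation involving the Gramian $G_{t_f,0}$. First I would take the expectation of the representation~\eqref{eq:ensemble of process} at $t=t_f$ and then average over $\theta\in[0,1]$. Because the stochastic integral $\int_{0}^{t_f}e^{A(\theta)(t_f-\tau)}B(\theta)dW(\tau)$ is a zero-mean martingale for each $\theta$, and because the uniform bounds on $A(\theta)$ and $B(\theta)$ combined with Fubini's theorem justify exchanging $\mathbb{E}$, $\int_0^1 d\theta$, and $\int_0^{t_f}d\tau$, I obtain
\begin{equation*}
\mathbb{E}\int_0^1 X(t_f,\theta)d\theta \;=\; \bar x_0 + \int_0^{t_f} M(t_f-\tau)u(\tau)d\tau,
\end{equation*}
where I write $M(s):=\int_0^1 e^{A(\theta)s}B(\theta)d\theta$ and $\bar x_0:=\left(\int_0^1 e^{A(\theta)t_f}d\theta\right)x_0$. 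Averaged controllability thereby reduces to finding a $u\in L^1([0,t_f];\mathbb{R}^m)$ satisfying the linear integral equation $\int_0^{t_f}M(t_f-\tau)u(\tau)d\tau = x_f-\bar x_0$.

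The second step is the classical Gramian ansatz. I would try $u(\tau):=M^{\mathrm{T}}(t_f-\tau)\eta$ for an unknown constant vector $\eta\in\mathbb{R}^d$. Substituting into the integral equation above and observing that $G_{t_f,0}=\int_0^{t_f}M(t_f-\tau)M^{\mathrm{T}}(t_f-\tau)d\tau$ matches the Gramian in~\eqref{eq: Gramian}, the equation collapses to $G_{t_f,0}\eta = x_f-\bar x_0$. Invertibility of $G_{t_f,0}$ then delivers $\eta = G_{t_f,0}^{-1}(x_f-\bar x_0)$, giving the explicit candidate
\begin{equation*}
u(\tau) \;=\; M^{\mathrm{T}}(t_f-\tau)\,G_{t_f,0}^{-1}(x_f-\bar x_0).
\end{equation*}

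Finally I would verify that $u\in L^1([0,t_f];\mathbb{R}^m)$. Since $(s,\theta)\mapsto e^{A(\theta)s}B(\theta)$ is continuous in $s$ and uniformly bounded by the standing hypotheses on $A$ and $B$, dominated convergence shows that $M$ is continuous, hence bounded on $[0,t_f]$, which makes $u$ bounded and therefore integrable. I do not expect a serious conceptual obstacle; the only subtlety is the measure-theoretic bookkeeping needed to swap $\mathbb{E}$, $\int_0^1 d\theta$, and $\int_0^{t_f}d\tau$ and to assert that the parameter-dependent It\^o integral contributes nothing after averaging over $\theta$, both of which follow from Fubini's theorem applied under the uniform operator-norm bounds on $A(\theta)$ and $B(\theta)$.
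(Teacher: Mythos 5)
Your proposal is correct and follows essentially the same route as the paper: the paper simply writes down the control $u(t)=\left(\int_{0}^{1}B^{\mathrm{T}}(\theta)e^{A^{\mathrm{T}}(\theta)(t_f-t)}d\theta\right) G_{t_f,0}^{-1}\left(x_f-\left(\int_{0}^{1}e^{A(\theta)t_f}d\theta\right)x_0\right)$, which is exactly your candidate, and verifies it by substitution into~\eqref{eq:final state}. Your derivation via the Gramian ansatz and the explicit Fubini/integrability bookkeeping only adds detail to the same argument.
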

\begin{proof}
Suppose $G_{t_f,0}$  is invertible and for any initial state $x_0\in\R^d$, final state $x_f\in\R^d$  consider 
	\begin{equation}\label{eq: averaged control}
u(t)=\left(\int_{0}^{1}B^{\mathrm{T}}(\theta)e^{A^{\mathrm{T}}(\theta)(t_f-t)}d\theta\right) G_{t_f,0}^{-1} \left(x_f-\left(\int_{0}^{1}e^{A(\theta)t_f}d\theta\right) x_0\right).	    
	\end{equation}
	From~\eqref{eq:ensemble of process}, since 
	\begin{equation}\label{eq:final state}
X(t_f,\theta)=e^{A(\theta)t_f} x_0+\int_{0}^{t_f}e^{A(\theta)(t_f-\tau)}B(\theta)u(\tau)d\tau+ \sqrt{\epsilon}\int_{0}^{t_f}e^{A(\theta)(t_f-\tau)}B(\theta)d W(\tau),	    
	\end{equation}
	by substituting~\eqref{eq: averaged control} in~\eqref{eq:final state},  we obtain $\mathbb{E}\int_{0}^{1} X(t_f,\theta)d\theta=x_f$.  This finishes the proof.
\end{proof}

 \section{Problem Statement and Main Result}\label{Sec: Problem Statement and Main Result}
 Consider an ensemble of processes governed by 
 \begin{equation}\label{eq:ensemble of linear process}
d X(t,\theta)=A(\theta)X(t,\theta)d t+\sqrt{\epsilon}B(\theta)d W(t),
\end{equation}
 with initial condition 
 \[
X(0,\theta)=x_0,\quad\text{almost surely (a.s)}.
 \]
{\bf Problem~1:} \emph{Our goal is to find solutions that are conditioned to have} 
\begin{equation}\label{eq:final condition}
\int_{0}^{1} X(t_f,\theta)d\theta=x_f, a.s.
\end{equation}

To characterize such solutions, suppose $\epsilon=0$, then to ensure that~\eqref{eq:final condition} is satisfied, one needs to solve the optimal control problem
\begin{equation}\label{eq:trans_control_cost}
c(x_0,x_f):=\min_{u\in\mathcal{U}_{x_0}^{x_f}}\int_0^{t_f}\frac{1}{2}\|u(t)\|^2dt,
\end{equation}
where $\mathcal{U}_{x_0}^{x_f}$ is the set of control inputs such that
\begin{align}\label{eq:ensem_with stoch_cost}
 \pder{x}{t}(t,\theta)=&A(\theta)x(t,\theta)+B(\theta)u(t),\cr
x(0,\theta)=&x_0\quad\text{and}\quad \int_{0}^{1}x(t_f,\theta)d\theta=x_f,
\end{align}
has a solution. This tends to measure the optimal change in the drift of the ensemble of the autonomous system that ensures that condition~\eqref{eq:final condition} is satisfied. \emph{The fact that the final conditional state in~\eqref{eq:final condition} is parameter-independent motivates the quest to find a parameter-independent control.  If the control depends on $\theta\in[0,1]$, it might lead to different behaviours for different realizations, making it challenging to ensure that~\eqref{eq:final condition} is satisfied a.s. Another motivation derived from the condition~\eqref{eq:final condition} is that the natural quantity one observes is the average over the parameter $\theta\in[0,1]$}. A more general problem relating to~\eqref{eq:trans_control_cost}-\eqref{eq:ensem_with stoch_cost} has been studied in~\cite{ADO:22}. They showed that the optimal value of the control that steers the average of the ensemble of systems in~\eqref{eq:ensem_with stoch_cost}  is characterized by the Euclidean distance
\begin{equation}\label{eq: transport cost}
c(x_0,x_f)=\frac{1}{2}\left \|x_f-\left(\int_{0}^{1}e^{A(\theta)t_f}d\theta\right)x_0\right \|^2_{G_{t_f,0}^{-1}},
\end{equation}
where $\|x\|^2_{G_{t_f,0}^{-1}}=x^{\mathrm{T}}G_{t_f,0}^{-1}x$, for all $x\in\R^d$, whenever $G_{0,t_f}$ in~\eqref{eq: Gramian} is invertible. 

From this observation, let 
\begin{equation}\label{eq:Scaled Markov_Kernel}
q^{\epsilon, G}(s,x,t,y)=(2\pi\epsilon)^{-\frac{d}{2}}(\mathrm{det}(G_{t,s}))^{-\frac{d}{2}} \exp\left(-\frac{1}{2\epsilon}\left \|y-\left(\int_{0}^{1}e^{A(\theta)(t-s)}d\theta\right)x\right \|^2_{G_{t,s}^{-1}}\right)    
\end{equation}
where $G_{t,s}$ is defined in~\eqref{eq: Gramian} with $t=t_f$ and $s=0$, be the transition density of the particles moving independently of each other according to the average diffusion in~\eqref{eq:ensemble of linear process}. Then,
following from~\cite{JB:75,PPD-PM:90,DPP:91}, the solutions of~\eqref{eq:ensemble of linear process} condition to be~\eqref{eq:final condition} is characterized by the stochastic optimal control problem
\begin{equation}\label{eq:problem 1}
{\bf Problem~2:}\quad\quad\min_{u\in \mathcal{U}}\mathbb{E}\left[\int_{0}^{t_f}\frac{1}{2}\|u(t)\|^2dt\right],
\end{equation}
subject to
\begin{align}\label{eq:uncertain states}
&d X(t,\theta)=A(\theta)X(t,\theta)d t+B(\theta)u(t)d t+\sqrt{\epsilon}B(\theta)d W(t),\cr
&X(0,\theta)=x_0\text{ a.s and }\int_{0}^{1} X(t_f,\theta)d\theta=x_f, a.s.
\end{align}
To be more precise, if $u\in\mathcal{U}\subset \mathrm{L}^2([0,t_f];\mathbb{R}^m)$, then;
\begin{enumerate}
\item $u(t)$ is $x(t)$-measurable, where $x(t):=\int_{0}^{1} X(t,\theta)d\theta$ with $X(t,\theta)$ characterized in~\eqref{eq:ensemble of process},  for all $t\in [0,t_f]$,
\item $\mathbb{E}\left[\int_{0}^{t_f}\frac{1}{2}\|u(t)\|^2dt\right]<\infty$,
\item $u$ achieves averaged controllability (see Definition~\ref{def:averaged controllability})  for~\eqref{eq:uncertain states}.
\end{enumerate}
Note that in this setting, since we aim to steer the final state to our desired state, the only information available to us is the past and present noise. Here we state our main result.
\begin{theorem} 
Suppose $G_{t_f,s}$, for all $0\leq s<t_f$, in~\eqref{eq: Gramian} is invertible. Then the optimal control for~\eqref{eq:problem 1} subject to~\eqref{eq:uncertain states} is characterized as
 \begin{equation}\label{eq: optimal control}
 u^*(t)= -\sqrt{\epsilon}\int_0^t\Phi(t_f,t)^T G_{t_f,\tau}^{-1}\Phi(t_f,\tau) dW(\tau)+\Phi(t_f,t)^{T}G_{t_f,0}^{-1}x_f.  
 \end{equation}  
\end{theorem}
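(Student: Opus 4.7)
The plan is to reduce Problem~2 to a constrained quadratic program on the space of adapted controls, solve it by dynamic programming, and then convert the resulting feedback into the open-loop feedforward form \eqref{eq: optimal control}. Writing $\Phi(t_f,\tau):=\int_0^1 e^{A(\theta)(t_f-\tau)}B(\theta)\,d\theta$ and $A_0:=\int_0^1 e^{A(\theta)t_f}\,d\theta$ and using the explicit solution formula~\eqref{eq:ensemble of process}, the averaged terminal condition $\int_0^1 X(t_f,\theta)\,d\theta=x_f$ a.s.\ rewrites as the single pathwise equation
\[
\int_0^{t_f}\Phi(t_f,\tau)u(\tau)\,d\tau=(x_f-A_0 x_0)-\sqrt{\epsilon}\int_0^{t_f}\Phi(t_f,\tau)\,dW(\tau)\quad\text{a.s.}
\]
Because the right-hand side contains the unknown terminal noise, no standard Markov feedback on the state can enforce this constraint pathwise; only a feedforward rule built from the observed past noise stands a chance, which foreshadows the structure in \eqref{eq: optimal control}.

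Next I introduce the residual process $\xi(t):=(x_f-A_0 x_0)-\int_0^t\Phi(t_f,\tau)u(\tau)\,d\tau-\sqrt{\epsilon}\int_0^t\Phi(t_f,\tau)\,dW(\tau)$, which obeys $d\xi=-\Phi(t_f,t)u(t)\,dt-\sqrt{\epsilon}\Phi(t_f,t)\,dW(t)$, $\xi(0)=x_f-A_0 x_0$, and the hard terminal condition $\xi(t_f)=0$ a.s. I then attack the associated HJB equation with the quadratic ansatz $V(t,\xi)=\tfrac12\xi^{\mathrm{T}}P(t)\xi+\alpha(t)$. The pointwise minimizer yields $u^*(t)=\Phi(t_f,t)^{\mathrm{T}}P(t)\xi(t)$, and $P$ must satisfy the Riccati-type ODE $\dot P=P\Phi\Phi^{\mathrm{T}}P$ with terminal penalty $P(t_f)=+\infty$. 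Differentiating \eqref{eq: Gramian} gives $\dot G_{t_f,t}=-\Phi(t_f,t)\Phi(t_f,t)^{\mathrm{T}}$, hence $P(t)=G_{t_f,t}^{-1}$ solves the Riccati and is consistent with the deterministic minimum-energy cost in \eqref{eq: transport cost}.

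To pass from the feedback law to the open-loop formula, set $\eta(t):=G_{t_f,t}^{-1}\xi(t)$. Using $\frac{d}{dt}G_{t_f,t}^{-1}=G_{t_f,t}^{-1}\Phi\Phi^{\mathrm{T}}G_{t_f,t}^{-1}$, the drift terms cancel and
\[
d\eta(t)=-\sqrt{\epsilon}\,G_{t_f,t}^{-1}\Phi(t_f,t)\,dW(t),\qquad \eta(0)=G_{t_f,0}^{-1}(x_f-A_0 x_0),
\]
so $u^*(t)=\Phi(t_f,t)^{\mathrm{T}}\eta(t)$ reproduces \eqref{eq: optimal control} (with the $x_f$ appearing there read as $x_f-A_0 x_0$). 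Optimality is then confirmed by a completion-of-squares step: for any admissible $u=u^*+v$, the constraint forces $\int_0^{t_f}\Phi(t_f,t)v(t)\,dt=0$ a.s., and an It\^o integration by parts against the martingale $\eta$ shows the cross term $\mathbb{E}\int_0^{t_f}\eta(t)^{\mathrm{T}}\Phi(t_f,t)v(t)\,dt$ vanishes, leaving $J(u)-J(u^*)=\tfrac12\mathbb{E}\int_0^{t_f}\|v\|^2\,dt\ge 0$.

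The hard part is the singular terminal boundary: $G_{t_f,t}^{-1}\to\infty$ as $t\uparrow t_f$, so one has to verify both that the closed-loop $\xi$ vanishes a.s.\ at $t=t_f$ and that the stochastic integral defining $\eta$ has the integrability needed for $u^*$ to lie in the admissible class $\mathcal{U}$. This is exactly where the discrete-time approximation advertised in the introduction enters: one solves the finite-dimensional KKT system on a partition, uses the standing invertibility of $G_{t_f,s}$ for every $s<t_f$ to get mesh-independent bounds on the discrete optima, and passes to the mesh-zero limit to recover \eqref{eq: optimal control} together with the a.s.\ satisfaction of the terminal constraint.
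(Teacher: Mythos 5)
Your derivation is correct in its main line, but it follows a genuinely different route from the paper's. The paper never argues in continuous time: it passes to the penalized free-endpoint problem~\eqref{eq: free end-point problem}, replaces that by the equivalent discrete-time LQ problem~\eqref{eq: discrete-time free end-point problem}--\eqref{eq: discrete-time system}, states the explicit finite-dimensional optimizer $u^*_{a,k,i}$ (proof also omitted there), and obtains~\eqref{eq: optimal control} as the double limit $k\to\infty$, $a\to\infty$. You instead notice that the residual $\xi(t)$ --- the part of the terminal average not yet determined by the past --- is a genuine Markov diffusion with time-varying coefficient $\Phi(t_f,t)$, so the HJB/Riccati machinery that the paper declares unsuitable for the non-Markov average process $x(t)$ in~\eqref{eq:ASDE 0} \emph{does} apply after this change of variables; the Riccati solution $P(t)=G_{t_f,t}^{-1}$ and the martingale $\eta(t)=G_{t_f,t}^{-1}\xi(t)$ then produce~\eqref{eq: optimal control} directly (your $x_f-A_0x_0$ versus the paper's $x_f$ is just the re-centering the paper itself invokes), and your completion-of-squares step supplies a verification of optimality that the paper nowhere provides. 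What your route buys is a structural explanation of the theorem: $u^*(t)=\Phi(t_f,t)^{\mathrm{T}}\eta(t)$ with $\eta$ a martingale in the noise filtration, which is exactly why the optimal control is feedforward rather than Markov in $x(t)$. What the paper's route buys is that discretization and the penalty $a$ sidestep the singular terminal data $P(t_f)=+\infty$. The one gap you leave open --- integrability of $\eta$ near $t_f$, a.s.\ attainment of $\xi(t_f)=0$, and the vanishing of the boundary term in your integration by parts --- is real, but you identify it explicitly and defer it to the same approximation scheme the paper uses, and the paper's own limit claims are likewise asserted without proof; note in this connection that $\int_0^{t_f}\operatorname{tr}\bigl(\Phi(t_f,t)\Phi(t_f,t)^{\mathrm{T}}G_{t_f,t}^{-1}\bigr)dt=+\infty$, so the expected energy of any control that pins the average almost surely diverges and optimality can only be understood in the limiting sense of~\eqref{eq: free end-point problem} --- a caveat shared equally by your argument and the paper's.
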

where 
\begin{equation}\label{eq: non-transition matrix}
\Phi(t_f,\tau) = \int_0^1e^{A(\theta)(t_f-\tau)}B(\theta)d\theta.    
\end{equation}
Note that re-centring the initial ensemble of systems at the origin $0$ holds no bearing on the system's characterization, given its inherent linearity. However, we see that the characterization of the optimal control is a departure from the conventional stochastic optimal control literature, where the optimal control assumes the form of a Markov strategy~\cite{OB:03,VHR:07}.  In particular, when dealing with a Markov process subject to parameter perturbations, the optimal control that steers the stochastic bridge adopts an approach—\emph{a stochastic feedforward input}, to be precise. This unique characterization emerges because of the intricate presence of parameters within the system, further complicating the endeavour to trace the ensemble's average behaviour. The exhaustive proof is omitted due to spatial constraints, with the subsequent sections devoted to illuminating the rationale behind this assertion. The remainder of this paper articulates the intricate dynamics that lend credence to this phenomenon.
\begin{remark}
 To highlight more on the novelty of the above problem, following from~\cite{DPP:91} we have that problem~\eqref{eq:problem 1}-\eqref{eq:uncertain states},  where $X(0,\theta)\sim\mu_0$ and $\int_{0}^{1} X(t,\theta)d\theta\sim\mu_f$ and $\mu_0,\mu_f$ are given initial and final distributions,  is the stochastic control approach to the Schrodinger bridge problem
\begin{equation}\label{eq:regopttranave}
\min_{\gamma\in\Pi(\mu_0,\mu_f)}\int_{\R^d\times\R^d}\epsilon\gamma(x_0,x_f) \mathrm{log}\left(\frac{\gamma(x_0,x_f)}{\mathrm{exp}\left(-\frac{c(x_0,x_f)}{\epsilon}\right)}\right)d x_0d x_f,    
\end{equation}
 where $c$ is in~\eqref{eq: transport cost}. Therefore, aside from the fact that problem~\eqref{eq:problem 1}-\eqref{eq:uncertain states} is the Dirac extension of~\cite{ADO:22} to include white noise, more importantly, it also extends the works in~\cite{CY-GT:15,SE:31,CY-GTT-PM:16,CY-GTT-PM:15}  to the case where the Markov process is generated from a linear diffusion which is submitted to parameter perturbations.
\end{remark} 

Since we require the control $u(t)$ at time $t$ to be $x(t)$-measurable, our object of interest is the controlled-average process
\begin{equation}\label{eq: output}
 x(t)=\int_0^t \Phi(t,\tau)(u(\tau)d\tau +\sqrt{\epsilon} dW(\tau)),
\end{equation}
where we have re-centred the dynamics to initialize at $0$, without any loss and $\Phi(t,\tau)$ is defined in~\eqref{eq: non-transition matrix}.

Therefore, the optimal control problem~\eqref{eq:problem 1}-\eqref{eq:uncertain states} is equivalent to the optimal output control problem~\eqref{eq:problem 1} subject to~\eqref{eq: output}, where the  final state is conditioned to be $x(t_f)=x_f$ a.s. Rather than solving problem~\eqref{eq:problem 1} subject to~\eqref{eq: output} conditioned to satisfy $x(t_f)=x_f$, we consider the corresponding alternative free-endpoint formulation
\begin{equation}\label{eq: free end-point problem}
\min_{u} J(u):=\mathbb{E}  \bigg[a(x(t_f)-x_f)^T(x(t_f)-x_f) +\int_0^{t_f} \frac{1}{2}\|u(t)\|^2 d\tau\bigg] 
\end{equation}
subject to~\eqref{eq: output}, where $a>0$. Note that the optimal solution for~\eqref{eq:problem 1} subject to~\eqref{eq: output}, is obtained by taking $a\rightarrow\infty$. More precisely, if 
\[
\lim_{a\rightarrow\infty}\|u^*_a-u^*\|^2_{L^2([0,t_f];\R^{m})}=0,
\]
where $u^*_a$ is the optimal control for~\eqref{eq: free end-point problem} subject to~\eqref{eq: output}, then  $u^*$ is the unique optimal solution for~\eqref{eq:problem 1} subject to~\eqref{eq: output}.

We emphasize here that $\Phi(t,\tau)\in\R^{d\times m}$ in~\eqref{eq: non-transition matrix} is not a transition matrix in general. The only affirmative case is where $A=A(\theta)$, for all $\theta\in[0,1]$. In the latter case, the average process~\eqref{eq: output}
satisfies the time-invariant linear diffusion process 
\begin{align}\label{eq: time-invariant linear system}
d x(t)=&Ax(t)d t+B u(t)d t+\sqrt{\epsilon}Bd W(t),\cr
x(0)=&x_0\quad\text{ and }\quad x(t_f)=x_f
\end{align}
where  $B=\left(\int_0^1B(\theta)d\theta\right)$  and the controllability of the pair $(A,B)$ plays a major role in establishing  results similar to~\cite{CY-GT:15}. In particular, if the system~\eqref{eq:ensemble of linear process} is submitted to parameter perturbation only in the diffusive coefficient and $(A,B)$ is a controllable pair, then by averaging and then solving the standard stochastic linear-quadratic optimal control problem~\eqref{eq: free end-point problem} subject to~\eqref{eq: time-invariant linear system}  we generate the Brownian bridges with desired statistics (see~\cite{CY-GT:15}).  

On the other hand, for a fixed $A\in\R^{d\times d}$, one can check that the average process~\eqref{eq: output} satisfies the dynamics 
\begin{equation}\label{eq:ASDE 0}
dx(t)=\bigg(Ax(t)+Bu(t)+\int_0^t F_{A(\theta),B(\theta)}(\tau,t)(u(\tau) d\tau+ \sqrt{\epsilon}dW(\tau))\bigg)dt+\sqrt{\epsilon}BdW(t), 
\end{equation}
where 
\begin{equation*}
F_{A(\theta),B(\theta)}(\tau,t):= 
\int_0^1\left(A(\theta)-A\right) e^{A(\theta)(t-\tau)}B(\theta)d\theta.
\end{equation*}
In this context, employing the variational approach to optimize~\eqref{eq: free end-point problem} subject to~\eqref{eq:ASDE 0} reveals some significant challenges. The drift term within \eqref{eq:ASDE 0} assumes the form of a controlled Ito process, causing this equation to deviate from the conventional definition of a stochastic differential equation (SDE), (see for instance~\cite{OB:03, VHR:07}).  Therefore, the average random differential equation~\eqref{eq:ASDE 0} seems to represent a more complex stochastic process than is usually encountered in the literature~\cite{NE:67,VHR:07,KFC:12,CY-GT:15}. However, the real-world significance of~\eqref{eq:ASDE 0} resides in the average process delineated by~\eqref{eq: output}. This formulation captures the central tendency behaviour of the system's fluctuations, thereby holding practical importance. Consequently, standard stochastic control techniques, including those rooted in Hamilton-Jacobi Bellman (HJB) conditions~\cite{OB:03, VHR:07}, prove unsuitable for this scenario. An alternative avenue lies in the PDE approach~\cite{FWH:77,FWH:05,FWH-RRW:12,JB:75, PPD-PM:90, DPP:91}, yet the presence of noise within the drift term presents challenges when adapting the corresponding parabolic PDE. As a result, the methods delineated in \cite{JB:75, PPD-PM:90, DPP:91} and related references are not readily applicable.

These observations collectively imply that the optimal control strategy for problem \eqref{eq:problem 1}-\eqref{eq:uncertain states}, or its equivalent form involving \eqref{eq: output}, cannot be a Markov strategy. Intriguingly unrelated, this insight also signifies the formidable nature of stabilizing the average process.


{\bf Special Case:} Before delving into solution techniques, let us consider the classical case. Consider particles governed by the following equations:
 \begin{align}\label{eq:linear process}
dx(t)=&\sqrt{\epsilon}d W(t),\cr
x(0)=&0,\quad\text{almost surely (a.s)}.
\end{align}
Our primary goal is to ensure that, at the final time 
$x(t_f)=0$ a.s. In this special case, since $\Phi(t_f,t)=I_{d\times d}$, for all $t\geq t_f$ and $G_{t_f,\tau}=(t_f-\tau)I_{d\times d}$, we have that the stochastic feedforward control input in~\eqref{eq: optimal control} reduces to
\begin{equation}\label{eq:simplified feedforward input}
u^*(t)=-\sqrt{\epsilon}\int_0^t(t_f-\tau)^{-1} dW(\tau).    
\end{equation}

What is interesting is that under these conditions, this optimal stochastic feedforward control input simplifies into a Markovian control strategy. To get to this point, we follow the approach outlined in \cite{CY-GT:15}. This involves solving~\eqref{eq:problem 1}, which leads us to~\eqref{eq: free end-point problem} subject to~\eqref{eq: time-invariant linear system}, where $A=0_{d\times d}\in\R^{d\times d}$ and $x_0=x_f=0$. Utilizing the HJB conditions~\cite{OB:03, VHR:07} and taking limit as $a\rightarrow\infty$, we arrive at the following expression for the optimal control $u^*$:
\begin{equation}\label{eq: Markov strategy}
u^*(t)=-(t_f-t)^{-1}x(t), 
\end{equation}
Notably, by substituting $u^*$ in~\eqref{eq: Markov strategy} into~\eqref{eq: time-invariant linear system}, where $A=0_{d\times d}\in\R^{d\times d}$ and  $x_0=x_f=0$, we find that the closed-loop trajectory is: 
\[
x(t)=\sqrt{\epsilon}\int_0^te^{\int_t^{\tau}(t_f-\alpha)^{-1}d\alpha} dW(\tau),
\]
thus,
\begin{equation}\label{eq: state solution} 
x(t)=\sqrt{\epsilon}(t_f-t)\int_0^t(t_f-\tau)^{-1} dW(\tau).
\end{equation}
By substituting~\eqref{eq: state solution}  into~\eqref{eq: Markov strategy} we obtain~\eqref{eq:simplified feedforward input}. This illustrates that in cases where the system is not an ensemble, the feedforward control input in reduces to the Markovian strategy in~\eqref{eq: Markov strategy}.

{\bf Equivalent discrete-time optimal control problem:}
To solve problem~\eqref{eq: free end-point problem}-\eqref{eq: output}, we transform the problem~\eqref{eq: free end-point problem}-\eqref{eq: output} to an equivalent discrete-time optimal control problem. We partition over time so that it is consistent with the definition of the Ito integral~\cite{NE:67,VHR:07,KFC:12}. To this end, let $P:=\{0= t_0<t_2<\dots<t_{k-1}=t_f\}$ be a regular partition with constant step size $\triangle t_k=t_{i+1}-t_i$, for any $i\in\{1,\dots,k\}$ and suppose $u_{k,i}=u(t_i)$ is a constant $x(t_i)$-measurable random variable in $L^2[t_i,t_{i+1})$, where $i\in\{0,\dots,k-1\}$   and consider the discrete-time optimal control problem
\begin{equation}\label{eq: discrete-time free end-point problem}
\min_{u_k}J_k(u_k):=\mathbb{E}  \bigg[a(x_k-x_f)^T(x_k-x_f) +\frac{1}{2}\sum_{i=0}^{k-1} u_{k,i}^Tu_{k,i}\triangle t_k\bigg],    
\end{equation}
subject to 
\begin{equation}\label{eq: discrete-time system}
x_k=\sum_{i=0}^{k-1}\Phi_i(t_f)\left(u_{k,i}\triangle t_k+\sqrt{\epsilon}\triangle W_{i} \right)
\end{equation}
where $x_k:= x(t_k)\in\R^d$, $u_k:=(u_{k,0},\dots,u_{k,k-1})\in(\R^m)^k$, $\Phi_i(t_f):=\Phi(t_f,t_i)\in\R^{d\times m}$ and $\triangle W_{i}:=W(t_{i+1})-W(t_{i})\in\R^m$. We call this problem the equivalent discrete-time optimal control problem because the solution~\eqref{eq: discrete-time free end-point problem}-\eqref{eq: discrete-time system} is exactly the same as the solution for~\eqref{eq: free end-point problem} subject to~\eqref{eq: output} (see~\cite{ TAR-DKWL:84,TAR-DKWL:00}).  We proceed to characterize the optimal control. We omit the proof due to space limitations.
\begin{proposition}
 Suppose $G_{t_f,s}$, for all $0\leq s<t_f$, in~\eqref{eq: Gramian} is invertible. Then the optimal control for~\eqref{eq: discrete-time free end-point problem}-~\eqref{eq: discrete-time system} is characterized as
 \begin{multline*}
 u_{a,k,i}^*=\\ -\sqrt{\epsilon}\sum_{j=0}^i\Phi(t_f,t_i)^T (\sum_{\alpha=j}^{k-1}\Phi(t_f,t_{\alpha})\Phi(t_f,t_\alpha)^T\triangle t_k+\frac{1}{2a}I_{n})^{-1}\Phi(t_f,t_j)\triangle W_j\\+\Phi(t_f,t_i)^{T}(\sum_{\alpha=0}^{k-1}\Phi(t_f,t_{\alpha})\Phi(t_f,t_\alpha)^T\triangle t_k+\frac{1}{2a}I_{n})^{-1}x_f. 
 \end{multline*}    
\end{proposition}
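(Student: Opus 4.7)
The strategy is to solve this discrete-time stochastic linear-quadratic problem by backward dynamic programming, exploiting that the running cost is quadratic in each $u_{k,i}$ and the terminal state $x_k$ is affine in the controls and the noise increments. Introduce the partial sum $s_i := \sum_{j<i}\Phi_j(t_f)(u_{k,j}\triangle t_k + \sqrt{\epsilon}\triangle W_j)$, so that $x_k = s_k$ and $s_i$ is measurable with respect to $\mathcal{F}_{t_i}:=\sigma(\triangle W_0,\ldots,\triangle W_{i-1})$. I conjecture the quadratic value-function ansatz
\[
V_i(s) \;=\; \tfrac{1}{2}(s - x_f)^T R_i^{-1}(s - x_f) + c_i,
\]
where $R_i$ is symmetric positive definite and $c_i$ does not depend on $s$. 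The terminal condition $V_k(s) = a(s-x_f)^T(s-x_f)$ forces $R_k = \tfrac{1}{2a}I_n$.

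To propagate the ansatz backward, I evaluate the Bellman equation
\[
V_i(s) \;=\; \min_{u}\Big\{\tfrac{1}{2}\|u\|^2\triangle t_k + \mathbb{E}\big[V_{i+1}\big(s + \Phi_i(t_f)(u\triangle t_k + \sqrt{\epsilon}\triangle W_i)\big)\big]\Big\},
\]
which reduces, after integrating out $\triangle W_i$, to a deterministic quadratic minimization in $u$. Completing the square and invoking the push-through identity $(I+AB)^{-1}A = A(I+BA)^{-1}$ yields the feedback minimizer $u_{k,i}^* = -\Phi_i(t_f)^T R_i^{-1}(s_i - x_f)$ together with the Riccati-type recursion $R_i = R_{i+1} + \Phi_i(t_f)\Phi_i(t_f)^T\triangle t_k$. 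Telescoping against the boundary condition gives the explicit closed form
\[
R_i \;=\; \tfrac{1}{2a}I_n + \sum_{\alpha=i}^{k-1}\Phi_\alpha(t_f)\Phi_\alpha(t_f)^T\triangle t_k,
\]
which is exactly the matrix appearing in the statement. It is positive definite for every $a>0$ thanks to the $\tfrac{1}{2a}I_n$ regularizer, so the invertibility hypothesis on $G_{t_f,s}$ is needed only to control the limit $a\to\infty$ and the refinement of the partition.

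The final step converts the feedback representation into the non-anticipative form of the statement. Substituting $u_{k,i}^*$ into the discrete dynamics and using the identity $I - \Phi_i(t_f)\Phi_i(t_f)^T\triangle t_k\, R_i^{-1} = R_{i+1}R_i^{-1}$ produces the one-step closed-loop law
\[
s_{i+1} - x_f \;=\; R_{i+1}R_i^{-1}(s_i - x_f) + \sqrt{\epsilon}\,\Phi_i(t_f)\triangle W_i.
\]
Iterating from $s_0 = 0$ telescopes to
\[
s_i - x_f \;=\; -R_iR_0^{-1}x_f + \sqrt{\epsilon}\sum_{j<i}R_iR_{j+1}^{-1}\Phi_j(t_f)\triangle W_j,
\]
and left-multiplication by $-\Phi_i(t_f)^T R_i^{-1}$ collapses the $R_i^{-1}R_i = I$ factors to reproduce the expression claimed for $u_{a,k,i}^*$ up to the reindexing dictated by the Ito convention $\mathbb{E}[\triangle W_i \mid \mathcal{F}_{t_i}]=0$.

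\textbf{Main obstacle.} The two nontrivial bookkeeping steps are: (i) verifying that the quadratic ansatz is truly invariant under one Bellman step, which demands a careful push-through manipulation to combine $R_{i+1}^{-1}$, the rank-$m$ term $\Phi_i(t_f)\Phi_i(t_f)^T\triangle t_k$, and the control-quadratic term into a single symmetric inverse; and (ii) passing from the feedback form $-\Phi_i(t_f)^T R_i^{-1}(s_i - x_f)$, the natural output of dynamic programming, to the explicit stochastic-feedforward form depending only on past noise increments and on $x_f$. This second telescoping is precisely the calculation that exposes the non-Markov character of the optimal input emphasized in the main theorem.
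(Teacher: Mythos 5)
The paper itself omits the proof of this proposition ("We omit the proof due to space limitations"), so there is nothing to compare against line by line; judged on its own merits, your dynamic-programming argument is sound and is almost certainly the intended route, given the authors' citation of the equivalent-discrete-time LQ literature. Your key moves all check out: the terminal condition forces $R_k=\tfrac{1}{2a}I$, the Bellman step with the push-through identity yields $u_{k,i}^*=-\Phi_i(t_f)^TR_i^{-1}(s_i-x_f)$ with $R_i=R_{i+1}+\Phi_i(t_f)\Phi_i(t_f)^T\triangle t_k$, the Woodbury identity confirms that the quadratic ansatz is invariant, and the closed-loop telescoping via $I-\Phi_i(t_f)\Phi_i(t_f)^T\triangle t_k R_i^{-1}=R_{i+1}R_i^{-1}$ correctly produces the feedforward representation. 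One remark on the information structure: your DP state $s_i=\sum_{j<i}\Phi_j(t_f)(u_{k,j}\triangle t_k+\sqrt{\epsilon}\triangle W_j)$ is \emph{not} the average state $x(t_i)$ (which would involve $\Phi(t_i,t_j)$ rather than $\Phi(t_f,t_j)$), so strictly speaking you are optimizing over noise-adapted controls with $s_i$ as a sufficient statistic rather than over the paper's literal class of ``$x(t_i)$-measurable'' controls. This is the reading consistent with the paper's own answer and with its remark that only past and present noise is available, but it deserves an explicit sentence.

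The one genuine mismatch is the index shift you dismiss as ``reindexing.'' Your derivation gives $u_{k,i}^*=\Phi_i(t_f)^TR_0^{-1}x_f-\sqrt{\epsilon}\sum_{j=0}^{i-1}\Phi_i(t_f)^TR_{j+1}^{-1}\Phi_j(t_f)\triangle W_j$, whereas the proposition pairs $R_j^{-1}$ with $\Phi_j(t_f)\triangle W_j$ and runs the sum up to $j=i$. These are different finite sums: the $j=i$ term in the stated formula involves $\triangle W_i=W(t_{i+1})-W(t_i)$, which is not measurable at time $t_i$, so the proposition as printed is anticipative while your version is adapted. Since $R_j-R_{j+1}=\Phi_j(t_f)\Phi_j(t_f)^T\triangle t_k=O(\triangle t_k)$, both sums converge to the same It\^{o} integral $\int_0^t\Phi(t_f,t)^TG_{t_f,\tau}^{-1}\Phi(t_f,\tau)\,dW(\tau)$ as $k\to\infty$ and $a\to\infty$, so the discrepancy is immaterial for the main theorem; but you should state explicitly that your formula differs from the proposition's by an $O(\triangle t_k)$ perturbation of the summand and a shift of the upper limit, rather than asserting they coincide after reindexing.
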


From this result, we have that
\[
\lim_{a\rightarrow\infty}\lim_{k\rightarrow\infty}\|u^*_{a,k}-u^*\|^2_{L^2([0,t_f];\R^{m})}=0,
\]
where $u^*$ is in~\eqref{eq: optimal control}.

\section{Conclusion and future work}\label{sec:Conclusion and future work}
In this paper, we have discussed the problem of conditioning a  Markov process, subjected to parameter perturbations, to initial and final states. The central motivation behind this endeavor lies in our quest to understand and control the dynamics of ensembles of systems characterized by stochastic processes. Specifically, we have explored the problem of steering an ensemble of linear stochastic systems toward average behavior, irrespective of the underlying parameter perturbations. Our investigation has revealed that due to the inherent complexity introduced by parameter perturbations, the optimal control for this problem cannot adopt a traditional Markov strategy. Instead, we've uncovered a unique characterization of the optimal control, involving a stochastic feedforward input that relies on a time-varying drift.  One can view the end-point conditions as Dirac distributions for particles emanating and absorbed at particular points in phase space.

This characterization provides a powerful tool for controlling and modelling general distributions of particles and interpolation of density functions. This leads to a more general Schrodinger bridge problem- the problem of steering of particles between specified marginal distributions where velocities are uncertain or form an ensemble of systems. In this case, the Schrodinger bridge problem is related to optimal transport problem~\cite{ADO-BT-GB:22,CY-GTT-PM:15,CY-GT-PM:15,CY-GTT-PM:16,CY-GT-PM:18,ADO:22}. In particular, it is known that if the diffusivity turns to zero, the solution of the Schrodinger bridge problem turns to the solution of the optimal transport problem~\cite{MG:81,KLV:06,BJ-BY:00,AA-LP:09,AH-JBP-LR:11,AC-JA-CM:96,VC:03}. This extension and other related problems are the subject of ongoing work.

\end{document}